\documentclass[11pt]{amsart}
\usepackage{graphicx} % Required for inserting images
\usepackage{amsmath,amsthm}
\usepackage{amsfonts}
%\usepackage{indentfirst}
%\newtheorem{theorem}{Theorem}
%\newtheorem{definition}{Definition}
%\newtheorem{example}{Example}
%\newtheorem{lemma}{Lemma}
%\usepackage{biblatex}
%\addbibresource{ref.bib}
\usepackage[margin=1.2in]{geometry}

\newtheorem{thm}{Theorem}
\newtheorem{theorem}[thm]{Theorem}
\newtheorem{defn}[thm]{Definition}
\newtheorem{definition}[thm]{Definition}

\newtheorem{lemma}[thm]{Lemma}

\newtheorem{example}[thm]{Example}

\newtheorem{quest}[thm]{Question}

\def\S{\mathcal{S}}
\def\G{\mathcal{G}}
\def\tr{\operatorname{tr}}
\def\Mn{\mathcal{M}_n(\mathbb{C})}
\def\M{\mathcal{M}}

\def\ep{\varepsilon}
\def\H{\mathcal{H}}
\def\bfep{\underline{\varepsilon}}
\def\epwas{\underline{\varepsilon}-\widehat{\mathrm{as}}}
\def\epws{\underline{\varepsilon}-\widehat{\mathrm{s}}}

\title{Groups of matrices with approximately submultiplicative spectra}
\author[M.~Mastnak]{Mitja Mastnak}
\address{Department of Mathematics and Computing Science, Saint Mary's University, 923 Robie St, Halifax, Nova Scotia, Canada B3N 1Z9}
\email{mitja.mastnak@smu.ca}
\author[L.~McNamara]{Lindsey McNamara}
\address{Department of Mathematics and Computing Science, Saint Mary's University, 923 Robie St, Halifax, Nova Scotia, Canada B3N 1Z9}
\email{lindsey.mcnamara@smu.ca}
\author[Z.~Yu]{Zhipeng Yu}
\address{Department of Mathematics and Computing Science, Saint Mary's University, 923 Robie St, Halifax, Nova Scotia, Canada B3N 1Z9}
\email{zhipeng.yu@smu.ca}

\date{\today}

\subjclass{15A18, 47D03, 20C99}

\begin{document}

\maketitle

\begin{abstract}
We say that a semigroup of matrices has a submultiplicative spectrum if the spectrum of the product of any two elements of the semigroup is contained in the product of the two spectra in question (as sets).  In this note we explore an approximate version of this condition.  
\end{abstract}

\section{Introduction}
A semigroup $\S$ of complex $n\times n$ matrices has a submultiplicative spectrum if for every pair $A,B\in\S$, every eigenvalue of the product $AB$ is equal to a product of an eigenvalue of $A$ and an eigenvalue of $B$.  This property was introduced by Lambrou, Longstaff, and Radjavi in 1992 \cite{LLR} and has since been extensively studied.  It has led to numerous nice structure results for matrix groups and semigroups.  Irreducible semigroups with this property are essentially finite nilpotent groups \cite[Thm. 3.3.4, Thm. 3.3.5]{RR} (see also \cite{LLR}).  In the original paper \cite{LLR}, the authors proved that such irreducible groups exist in all odd dimensions.  Kramar considered even dimensions in \cite{K1} and showed that for even $n$, there exist irreducible groups of $n\times n$ matrices with submultiplicative spectra if and only if $n$ is divisible by $8$.
In \cite{Om} the structure of irreducible $2$-groups was studied and in \cite{K2} this study was extended to include $p$-groups for general $p$.  In \cite{K3} the class ($\hat{\mathrm{s}}$) of groups $G$ with the property that all their irreducible sub-representations are submultiplicative was introduced.  A systematic study of this class of finite groups was initiated in \cite{GKOR}.

In this note, we start the study of an approximate version of submultiplicativity.  The idea of replacing exact conditions with approximate ones has a rich (recent) history.  The first ``approximate version" result in the context of simultaneous triangularization of matrix semigroups is \cite{BR}.  There, the authors ask ``how small can the spectra of nonzero commutators in a unitary group of matrices be?"  They show that for a non-commutative unitary matrix group $\G$, there always exist elements $A,B\in\G$ such that the spectral radius $\rho(AB-BA)$ of their ring commutator is at least $\sqrt{3}$.  Alternatively, this means that for unitary groups, the exact triangularizing condition (actually diagonalizing in this case) 
$$\rho(AB-BA)=0$$ 
and be replaced by an equivalent approximate condition 
$$\rho(AB-BA)<\sqrt{3}.$$  Since then a number of analogous results have been proven.  For example, in \cite{Ball}, the authors prove, among other things, that the exact triangularizing condition for semigroups of matrices 
$$\tr(ABC-BAC)=0$$ 
can be replaced by an approximate version 
$$|\tr(ABC-BAC)|<3$$ 
for unitary groups.  They also explore to what extent a similar result can apply to more general semigroups of matrices.  In \cite{KMOR} the authors show (using Chabauty topology) that for unitary groups $\G$ in 
$\mathcal{M}_n(\mathbb{C})$ every continuous multi-variate triangularizing condition 
$$
f(A_1,\ldots, A_k)=0
$$
can be replaced by an approximate version
$$
|f(A_1,\ldots, A_k)|\le \varepsilon_{f,n}
$$
for some $\varepsilon_{f,n}>0$ that depends on $f$ and $n$.  Naturally, in concrete cases, it can also be important to find the largest possible $\varepsilon_{f,n}$.  In particular, the question of whether 
$\varepsilon_{f,n}$ is bounded below by some polynomial in 
$\frac{1}{n}$ is of interest.

We say that a semigroup $\S$ of complex $n\times n$ matrices is $\ep$-submuliplicative if for all $A,B\in \S$ and for every eigenvalue $\gamma$ of $AB$, there exist eigenvalues $\alpha$ of $A$ and $\beta$ of $B$ such that
$$
|\gamma-\alpha\beta|\le \ep\rho(A)\rho(B).
$$
It is almost immediate that irreducible such semigroups do not contain any nonzero nilpotent elements.  However, such semigroups need not be groups.  For any value of $\ep>0$, there are irreducible semigroups of matrices of rank at most $1$, that satisfy this condition.  In this note we focus primarily on unitary groups. In this case, the condition becomes
$$
|\gamma-\alpha\beta|\le \ep.
$$
It also turns out that it is more convenient to consider a multiplicative analogue of this condition.  We say that a group $\G$ of unitary $n\times n$ matrices is $\ep$-argument-submultiplicative (or $\ep$-ASM) if for every pair $A,B\in\G$ and every eigenvalue $\gamma$ of $AB$, there exist eigenvalues $\alpha$ of $A$ and $\beta$ of $B$ such that 
$$
\frac{1}{2\pi}\left|\arg\left(\frac{\alpha\beta}{\gamma}\right)\right|\le\ep.
$$
(Here $\arg(z)\in (-\pi,\pi]$ denotes the principal argument of $z$.) We show, among other things, that for a unitary group, and $\varepsilon=\frac{1}{2n^2}$, the $\ep$-argument-submultiplicativity
implies that the group is finite modulo its centre.  We show, by example, that the order of $\varepsilon$ above is sharp: there exist
$\frac{2}{n^2}$-ASM groups of unitary matrices that are not finite modulo their centre.  We do not know at this point whether the quadratic order is still sharp if we additionally assume irreducibility.  More precisely: there are easy examples of irreducible $\ep$-ASM groups that are not essentially finite for $\ep>\frac{1}{2n}$, but we do not know if such examples exist when $\ep=\frac{c}{n^2}$ for some fixed constant $c$.

Our approximate results are quite different from those in the literature.  Most similar sounding results deal with studying conditions that imply reducibility or triangularizability of matrix collections (groups, semigroups, etc.).  In the case of submultiplicativity (and its approximate version) the situation is different. We mainly deal with irreducible (or at least completely reducible) groups and semigroups; the condition in question then implies something about their structure.  Another important distinction is that in most other situations in the literature, the approximate condition actually implies the exact condition for small $\ep$ (and not much is said about the structure when $\ep$ is not sufficiently small to ensure this).
In our paper,
this is not the case.  Even for an $\ep$ that is sufficiently small to
imply structure results (e.g., essential finiteness)
we have interesting examples of matrix groups that are approximately submultiplicative, but not submultiplicative.

\section{Preliminaries}
We start by reminding the reader of some definitions that we briefly mentioned in the introduction.  Throughout the section $\S\subseteq\Mn$ will denote a semigroup of complex $n\times n$ matrices and $\G\subseteq\Mn$ will denote a group of unitary matrices.  We say that $\S$ is $\ep$-submultiplicative if for all $A,B\in\S$ and every eigenvalue $\gamma$ of $AB$, there exist eigenvalues $\alpha$ of $A$ and $\beta$ of $B$ such that
$$
|\gamma-\alpha\beta|\le \ep\rho(A)\rho(B).
$$
Recall that a semigroup $\mathcal{S}$ is \emph{essentially finite} if $\mathcal{S}\subseteq \mathbb{C}\mathcal{S}_0$ where $\mathcal{S}_0$ is some finite semigroup. A semigroup is \emph{irreducible} if it has no nontrivial invariant subspaces, and \emph{reducible} otherwise. It has been shown \cite[Thm. 3.3.4.]{RR} that for an irreducible semigroup $\mathcal{S}$ with submultiplicative spectrum, $\mathcal{S}\setminus \{0\}$ is an essentially finite group. The following example will show that this does not hold for approximately submultiplicative semigroups.
\begin{example}
    Define the semigroup $\mathcal{S}_r$ of matrices for $0 < r < 1$ as 
    \[\mathcal{S}_r = \left\{\lambda\begin{pmatrix}
    1 & x^*\\
    y & yx^*
\end{pmatrix}: x,y \in \mathbb{C}^{n-
1}, ||x||, ||y|| < r, \lambda\in \mathbb{C}\right\}.\]
\end{example}
In \cite[Ex. 2.3.]{BR} it has been shown that $\mathcal{S}_r$ is irreducible. It is easy to see that $\S_r$ cannot be essentially finite.  Indeed, it is obvious that two distinct elements of $\S_r$ whose 
$(1,1)$-entries are both equal to $1$ cannot be multiples of each other and (again obviously) there are infinitely many pairwise-distinct such elements.  It is fairly straightforward that $\S_r$ is $\frac{4r^2}{(1-r^2)^2}$-submultiplicative (see below).  Note that for any $\ep>0$ we have that for a sufficiently small $r$, 
$\frac{4r^2}{(1-r^2)^2}<\ep.$ Let 
$$
A=\lambda\begin{pmatrix}
    1 & a^*\\
    b & ba^*
\end{pmatrix}, B=\mu\begin{pmatrix}
    1 & x^*\\
    y & yx^*
\end{pmatrix}.
$$
Then the unique nonzero eigenvalue of $A$ is $\alpha=\lambda(1+a^* b)$, the unique nonzero eigenvalue of $B$ is
$\beta=\mu(1+x^* y)$, and the unique 
nonzero eigenvalue of $AB$ is
$\gamma=\lambda\mu(1+a^*y)(1+x^* b)$.
A quick direct computation shows that
\begin{eqnarray*}
\frac{|\gamma-\alpha\beta|}{\rho(A)\rho(B)} &=&
\frac{|\gamma-\alpha\beta|}{|\alpha| |\beta|} 
= \left|\frac{\gamma}{\alpha\beta}-1
\right|\\ 
&=& \left|\frac{(1+a^*y)(1+x^* b)}{(1+a^* b)(1+x^* y)}-1\right|
\le \left|\left|\frac{(1+a^*y)(1+x^* b)}{(1+a^* b)(1+x^* y)}\right|-1\right|.
\end{eqnarray*}
Note that each of the factors $|(1+a^*b)|, |(1+a^*y)|, |(1+x^*b)|,
|(1+x^*y)|$ lies in the interval
$\left[1-r^2,1+r^2\right]$ and therefore
the value of $\left|\frac{(1+x^*b)(1+a^* y)}{(1+x^* y)(1+a^* b)}\right|$
must be between 
$\frac{(1-r^2)^2}{(1+r^2)^2}$ and
$\frac{(1+r^2)^2}{(1-r^2)^2}$.  Subtracting $1$ from these values then yields the promised estimate. 

\section{Main Results}  As mentioned in the introduction, a scaled multiplicative version of approximate submultiplicativity is more convenient in the context of unitary groups: in this case the eigenvalues always lie on the unit circle in $\mathbb{C}$ and we find it more convenient to consider the ``arc-distance" between them.  The scaling factor $\frac{1}{2\pi}$ was chosen so that the distance between two consecutive $n$-th roots of unity is $\frac{1}{n}$ (and, more generally, for $x\in[0,1)$, the distance between $1$ and $e^{2\pi x i}$ is $x$).  %A convenience of this will hopefully become apparent in the proofs of the Theorem \ref{thm-5} and the Theorem \ref{thm-main}.
\begin{definition}
     We say that the spectrum of a group $\mathcal{G}\subseteq \mathcal{M}_n(\mathbb{C})$ of unitary matrices is $\varepsilon'$-argument-submultiplicative (or $\ep'$-ASM) if for every $A,B \in \mathcal{G}$ and every $\gamma\in\sigma(AB)$, there exist elements $\alpha\in \sigma(A), \beta\in\sigma(B)$ such that
\[\frac{1}{{2\pi}}\left|\arg\left(\frac{\alpha\beta}{\gamma}\right)\right|\le\varepsilon'.\]
\end{definition}
We remark, that for unitary groups $\ep'$-ASM implies $(2\pi\ep')$-submultiplicativity and\break $\ep$-submultiplicativity implies $(\frac{1}{2}\ep)$-ASM.  This is because for complex numbers $z,w$ of modulus one we have that $|z-w|\le |\arg(z/w)|$ and $|\arg(z/w)|\le \pi |z-w|$. The second inequality can be substantially improved when $|z-w|$ is small as $\lim_{z\to w}\frac{|z-w|}{|\arg{z/w}|}=1$.  We will sometimes refer to $\frac{1}{2\pi}\left|\arg\left(\frac{z}{w}\right)\right|$ as the \emph{scaled-argument-distance} between $z$ and $w$. 

In this section we will show that $\frac{1}{2n^2}$-ASM groups of unitary $n\times n$ matrices are finite modulo their centres (hence essentially finite when irreducible).

But first we show by example, that for infinitely many $n$, there exist $\frac{2}{n^2}$-ASM groups of unitary $n\times n$ matrices that are not finite modulo their centres.  Let $p$ be an odd prime, let $D$ be any $p\times p$ unitary diagonal matrix of determinant $1$, and let $C$ to be the $p \times p$ cycle matrix, i.e.,
\[C = \begin{pmatrix}
     0 & 1 & 0&\dots & 0\\
     0 & 0 & 1 & \dots & 0 \\
     \vdots&\vdots&\ddots&\ddots&\vdots\\
     0 & 0 &\dots & 0 & 1\\
     1 & 0 & \dots & 0 & 0
 \end{pmatrix}.\]
We will use the following well-known fact about the spectra of $DC^k$.  We include a sketch of the proof for the sake of completeness.
\begin{lemma}\label{lem-spectrum}
    Suppose $k$ is some positive integer. Then for $C$ and $D$ as above we have that
    \[\sigma(DC^k) = \begin{cases}
 \sigma(D) ,& \text{ if } k\mbox{ is divisible by } p\\
 \{1, \theta, \theta^2, \dots, \theta^{p-1}\} ,&\text{ otherwise}
 \end{cases}\]
     where $\theta = e^{\frac{2\pi i}{p}}$ is a fixed primitive $p$-th root of unity.
\end{lemma}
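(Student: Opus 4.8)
The plan is to split into the two cases of the statement and, in the nontrivial case, reduce everything to the characteristic polynomial of a monomial (weighted permutation) matrix.

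The first case, $p\mid k$, is immediate: the cycle matrix $C$ has order $p$, so $C^p=I$ and hence $C^k=I$ whenever $p\mid k$. Then $DC^k=D$ and $\sigma(DC^k)=\sigma(D)$.

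For the second case, $p\nmid k$, the primality of $p$ gives $\gcd(k,p)=1$, so the permutation underlying $C^k$ is again a single $p$-cycle with no fixed points. Consequently $M:=DC^k$ is a monomial matrix: it has exactly one nonzero entry in each row and each column, and the multiset of those nonzero entries is precisely the diagonal of $D$. I would first record two structural facts. First, $M^p=\det(D)\,I=I$: raising a single-cycle monomial matrix to the $p$-th power returns a diagonal matrix each of whose entries is the product of all the weights around the full cycle, namely $\prod_i d_i=\det D=1$. This already shows $M$ is diagonalizable with $\sigma(M)$ contained in the $p$-th roots of unity. Second, since $C^k$ has no fixed points, $\tr(M)=0$; more generally $\sigma^j$ is a $p$-cycle for $1\le j\le p-1$ (as $p$ is prime), so $\tr(M^j)=0$ for all such $j$ while $\tr(M^p)=\tr(I)=p$.

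The one genuinely delicate point is upgrading ``contained in the $p$-th roots of unity'' to ``equal to all of them,'' since $M^p=I$ by itself does not rule out repeated roots. I would settle this by computing the characteristic polynomial directly from the monomial structure: in the Leibniz expansion of $\det(\lambda I-M)$, the only permutations contributing a nonzero term are the identity (giving $\lambda^p$) and the single full cycle read off from the nonzero pattern of $M$, because any proper nonempty arc of a $p$-cycle fails to close up into a permutation. Tracking the sign of that $p$-cycle, which is $(-1)^{p-1}=1$ for odd $p$, together with the product of the weights, yields $\det(\lambda I-M)=\lambda^p-\det D=\lambda^p-1$, so $\sigma(M)=\{1,\theta,\dots,\theta^{p-1}\}$ exactly. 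Alternatively, one can bypass the determinant bookkeeping via the power sums: the multiplicities $(m_0,\dots,m_{p-1})$ of the roots $1,\theta,\dots,\theta^{p-1}$ satisfy $\sum_r m_r\theta^{rj}=\tr(M^j)=p\,\delta_{j,0}$ for $0\le j\le p-1$, so their discrete Fourier transform is $(p,0,\dots,0)$, and inverting forces $m_r=1$ for every $r$.

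I expect the main obstacle to be exactly this surjectivity-onto-all-roots step; the clean resolution is the monomial-matrix identity $\det(\lambda I-M)=\lambda^p-\prod(\text{nonzero entries})$, with the power-sum computation as a self-contained fallback.
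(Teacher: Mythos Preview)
Your argument is correct. The primary route---computing the characteristic polynomial of the monomial matrix $DC^k$ via the Leibniz expansion and obtaining $\lambda^p-\det D=\lambda^p-1$---is exactly what the paper does (it just says ``an easy direct computation shows that the characteristic polynomial of $DC^k$ is $\lambda^p-1$''); you simply spell out the combinatorics behind that computation.

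Where you and the paper diverge is in the alternative argument. The paper instead exhibits an explicit change of basis showing $DC^k$ is similar to $C^k$ (conjugate by the diagonal matrix with entries $\prod_{i\le j}d_i$), and then another showing $C^k$ is similar to $C$; this is slightly more structural and immediately gives the Jordan form. Your fallback via the power sums $\tr(M^j)=p\,\delta_{j,0}$ and the inverse DFT of the multiplicity vector is a genuinely different device: it avoids any basis bookkeeping and would work verbatim for any matrix known a priori to be diagonalizable with spectrum inside the $p$-th roots of unity. Both alternatives are clean; the paper's similarity argument has the small bonus of producing eigenvectors, while yours is basis-free.
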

\begin{proof}
First note that if $k$ is a multiple of $p$, then $C^k = I$, so $\sigma(DC^k) = \sigma(DI) = \sigma(D)$. Now suppose $1\le k\le p-1$. Then an easy direct computation shows that the characteristic polynomial $p(\lambda)$ of $D C^k$ is $\lambda^p-1$. 

Alternatively, it is in fact possible to see directly, that the matrices $D C^k$ and $C^k$ are similar: the matrix form of the linear map corresponding to $DC^k$ in basis $f_j=\left(\prod_{i=1}^j d_i\right) e_{j}$, $j=1,\ldots, p$  is $C^k$ (here $d_i$ denotes the $i$-th diagonal entry of $D$).  It is also 
well-known that the matrices $C^k$ and $C$ are similar by the change of basis $g_j=e_{1+(j-1)k}$,
$j=1,\ldots, p$ (here $(e_j)_{j=1}^p$ denotes the standard orthonormal basis of $\mathbb{C}^p$, we use the convention that for $j>p$ we have $e_{j}=e_{j-p}$). 
\end{proof} 

\begin{example}\label{ex-tadpole}
    Let (as above) $C$ be the $p\times p$ cycle matrix and let $\xi = e^{\frac{2\pi i}{p^2}}$ (i.e., $\xi$ is a fixed primitive $p^{2}$-th root of unity).  For a unitary diagonal matrix $D$, and integer parameters $k\in\{0, 1,2,\dots, p-1\}$, and $a_j\in \{0,1,2,\dots, p-1\}$ with $j = 1, 2, \dots, p - 1$, we define
the corresponding ``tadopole matrix" $A\in\M_{2p}(\mathbb{C})$ by
    \[A= A\left(D,k,(a_j)_{j=1}^{p-1}\right)=\begin{pmatrix}
     D C^k\\
    & 1\\
    &&\xi^{k+a_1p}\\
    &&&\xi^{2k+a_2p}\\
    &&&&\ddots\\
    &&&&&\xi^{(p-1)k+a_{p-1}p}
 \end{pmatrix}.\]
We will use notation $A_H=DC^k$ to denote the ``head" of the tadpole matrix, $D_A=D$ to denote the ``weight" of the head, and \[A_T = \begin{pmatrix}
     1\\
     &\xi^{k+a_1p}\\
     &&\ddots\\
     &&&\xi^{(p-1)k+a_{p-1}p}
 \end{pmatrix}\]
 to denote the ``tail'' of the tadpole matrix.  With this notation in mind, we write (purposefully abusing the notation somewhat):
$$
A=\begin{pmatrix} A_H & \\
& A_T \end{pmatrix}=\begin{pmatrix} D_AC^{k_A} & \\
& A_T \end{pmatrix}.
$$
\end{example}
Let $\mathcal{T}$ be the set of all such tadpole matrices (i.e., tadpole matrices corresponding to all possible choices of $D$, $k$, $(a_j)_{j=1}^{p-1}$).  We claim that $\mathcal{T}$ forms a group under matrix multiplication. Clearly, the identity matrix is in $\mathcal{T}$ and the inverses of elements from $\mathcal{T}$ are again in $\mathcal{T}$.  We are left to show that the product of any two elements of $\mathcal{T}$ is also in $\mathcal{T}$.  Let $A=A\left(D,k,(a_j)_{j=1}^{p-1}\right)$ be as above and let $B \in \mathcal{T}$ be written as
\[B= \begin{pmatrix}
     D_BC^\ell\\
     &1\\
     &&\xi^{\ell+b_1p}\\
     &&&\xi^{2\ell+b_2p}\\
     &&&&\ddots\\
     &&&&&\xi^{(p-1)\ell+b_{p-1}p}
 \end{pmatrix},\]
 with  $\ell \in \{1,2,\dots, p-1\}$ and $b_j \in \{0, 1,2,\dots, p-1\}$ for $j = 1, 2, \dots, p - 1$. Then observe that
 \[AB = \begin{pmatrix}
     D_{AB}C^r\\
     &1\\
     &&\xi^{r+c_1p}\\
     &&&\xi^{2r+c_2p}\\
     &&&&\ddots\\
     &&&&&\xi^{(p-1)r+c_{p-1}p}
 \end{pmatrix},\]
 where $D_{AB} = D_A(C^kD_BC^{-k})$, $r\equiv k+\ell-p= k+\ell  \text{ (mod p)}$, and $c_j \equiv a_j + b_j + j \text{ (mod p)}$. Note that 
 \[\xi^{jr+c_jp} = \xi^{jk+a_jp}\xi^{j\ell+b_jp} = \xi^{j(k+\ell)+(a_j+b_j)p}= \xi^{j(k+\ell-p)+(a_j+b_j+j)p}.\]
So the exponents in the tail of $AB$ are of the required form for $AB$ to belong to $\mathcal{T}$.

 \begin{theorem}\label{thm-5}
     The group $\mathcal{T}$ is $\frac{1}{2p^2}$-argument-submultiplicative. 
 \end{theorem}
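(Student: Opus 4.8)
The plan is to verify the defining inequality for every pair $A,B\in\mathcal{T}$ and every $\gamma\in\sigma(AB)$ by exploiting the block decomposition $\sigma(AB)=\sigma((AB)_H)\cup\sigma((AB)_T)$ and splitting into cases according to how many of the head-exponents $k_A=k$ and $k_B=\ell$ vanish. Throughout I write $\theta=e^{2\pi i/p}=\xi^{p}$, and I record each eigenvalue that happens to be a $p^2$-th root of unity by its exponent base $\xi$ modulo $p^2$. The single most useful preliminary observation is the multiplicativity identity already verified above: the $j$-th tail eigenvalue of $AB$ equals the product of the $j$-th tail eigenvalues of $A$ and of $B$. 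Consequently, if $\gamma\in\sigma((AB)_T)$ I may take $\alpha,\beta$ to be the corresponding tail eigenvalues of $A$ and $B$, so that $\alpha\beta=\gamma$ and the scaled-argument-distance is $0$. Thus the tail of $AB$ is matched exactly in every case, and all remaining work concerns $\gamma\in\sigma((AB)_H)=\sigma(D_{AB}C^{r})$.

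The second step handles the cases in which at least one of $k,\ell$ is zero, where I again produce exact matches. If $k=\ell=0$ then $r=0$ and $(AB)_H=D_AD_B$ is diagonal, so each head eigenvalue is a product $(D_A)_{ii}(D_B)_{ii}$ of a head eigenvalue of $A$ with one of $B$ (here I use Lemma \ref{lem-spectrum} in the form $\sigma(D_AC^{0})=\sigma(D_A)$), giving $\alpha\beta=\gamma$. If exactly one of $k,\ell$ vanishes then $r\not\equiv 0\pmod p$, so Lemma \ref{lem-spectrum} makes $\sigma((AB)_H)$ the full set of $p$-th roots of unity; writing $\gamma=\theta^{m}$ I pair $1\in\sigma(A)$ (the first tail eigenvalue) with $\theta^{m}\in\sigma(B_H)$ when $k=0$, and symmetrically when $\ell=0$, again obtaining $\alpha\beta=\gamma$ exactly.

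The crux is the remaining case $k\neq0$ and $\ell\neq0$; this is the only place where an exact match genuinely fails, because here $\sigma(A)$ and $\sigma(B)$ are independent of the diagonal weights $D_A,D_B$, whereas when $r=k+\ell\equiv0\pmod p$ (which forces $k+\ell=p$) the head eigenvalues $\gamma\in\sigma(D_{AB})$ range over essentially arbitrary points of the unit circle. The plan is therefore to handle all head eigenvalues of $AB$ in this case uniformly by showing that the product set $\sigma(A)\sigma(B)$ contains \emph{every} $p^2$-th root of unity. Since $k\neq0$, Lemma \ref{lem-spectrum} gives $\sigma(A_H)=\{1,\theta,\dots,\theta^{p-1}\}$, whose $\xi$-exponents are exactly the multiples of $p$ modulo $p^2$; and since $\ell\neq0$, the $\xi$-exponents $\{0\}\cup\{\,j\ell+b_jp:1\le j\le p-1\,\}$ of the tail eigenvalues of $B$ run through every residue class modulo $p$ (because $j\mapsto j\ell$ permutes the nonzero residues). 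Adding a fixed tail exponent of $B$ lying in residue class $s$ to the full set of multiples of $p$ recovers every exponent congruent to $s$ modulo $p^2$, and letting $s$ range over all residues yields all of $\mathbb{Z}/p^2\mathbb{Z}$; hence every $p^2$-th root of unity lies in $\sigma(A_H)\sigma(B_T)\subseteq\sigma(A)\sigma(B)$. As consecutive $p^2$-th roots of unity are at scaled-argument-distance $\frac{1}{p^2}$, every point of the unit circle, and in particular every such $\gamma$, lies within $\frac{1}{2p^2}$ of one of them, which is precisely the required bound.

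I expect the main obstacle to be exactly this last case: recognizing that an exact match is genuinely unavailable (the spectra of $A$ and $B$ do not see the weights $D_A,D_B$, while the spectrum of $AB$ does) and then isolating the clean arithmetic statement that the head of $A$ together with the tail of $B$ already generate all $p^2$-th roots of unity. The case bookkeeping (which of $k,\ell$ vanish, and the resulting value of $r\bmod p$) is routine but must be carried out carefully so that the exact-match cases and the density case together exhaust $\sigma(AB)$; the sharpness of the constant $\frac{1}{2p^2}$ is then automatic from the $\frac{1}{p^2}$-spacing of the $p^2$-th roots.
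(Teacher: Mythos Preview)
Your proof is correct and follows essentially the same approach as the paper's: a case split on whether $k$ and $\ell$ vanish, exact matches in all but the critical case, and in that case the observation that $\sigma(A)\sigma(B)$ contains all $p^{2}$-th roots of unity. The only cosmetic differences are that you front-load the tail analysis, merge the paper's Cases~3 and~4 into a single density argument (which is harmless since when $r\neq 0$ the head eigenvalues of $AB$ are themselves $p^{2}$-th roots), and use $\sigma(A_H)\sigma(B_T)$ where the paper uses the symmetric $\sigma(A_T)\sigma(B_H)$.
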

 \begin{proof}
     Let $A,B\in\mathcal{T}$.  Due to symmetry we have four possible cases to consider.
 
 \textit{Case 1.} Suppose that both $A$ and $B$ are diagonal. In this case we obviously have $\sigma(AB)\subseteq\sigma(A)\sigma(B)$.
 
 \textit{Case 2.} Suppose that $A$ is diagonal, but $B$ is not. Then $k=0$ and hence $\sigma(D_AC^k) = \sigma(D_A)$. Note that \[\sigma(A) = \sigma(D_A) \cup \{1, \xi^{k+a_1p}, \xi^{2k+a_2p}, \dots, \xi^{(p-1)k+a_{p-1}p}\}\] \text{and} \[\sigma(B) = \sigma(D_BC^{\ell})\cup\{1, \xi^{\ell+b_1p}, \xi^{2\ell+b_2p}, \dots, \xi^{(p-1)\ell+b_{p-1}p}\}.\] 
The tail of $AB$ is the product of the tails of $A$ and $B$. Since these are diagonal matrices, we have that $\sigma(A_TB_T) \subseteq \sigma(A_T)\sigma(B_T)$. We are left to prove the same for the heads of $A$ and $B$. 
 Since $D_A$ and $D_B$ are unitary diagonal matrices of determinant $1$, $D_AD_B$ is also a unitary diagonal matrix of determinant $1$. By Lemma \ref{lem-spectrum} we have that
$$
\sigma(D_A D_B C^\ell) = \{1,\theta,\ldots, \theta^{p-1}\} = \sigma(D_B C^\ell),
$$
and therefore
$\sigma(D_AD_BC^\ell) \subseteq \sigma(B)$. Since $1\in \sigma(A)$, we have that $\sigma(D_AD_BC^\ell)\subseteq \sigma(A)\sigma(B)$. Hence, $\sigma(AB)\subseteq \sigma(A)\sigma(B).$  

 \textit{Case 3.} Suppose none of $A$, $B$, $AB$ are diagonal. We will again show that $\sigma(AB)\subseteq\sigma(A)\sigma(B)$. The tails must still be diagonal, so by arguments in Case 2 we have that $\sigma(A_TB_T)\subseteq \sigma(A_T)\sigma(B_T)$. It is therefore sufficient to show $\sigma(D_AC^kD_BC^{\ell})\subseteq \sigma(D_AC^k)\sigma(D_BC^{\ell})$. We find, just like in Case 2, that each of the sets $\sigma(D_A C^k), \sigma(D_B C^\ell), \sigma(D_{AB} C^r)$ is equal to $\{1,\theta,\ldots,\theta^{p-1}\}$ from when the result immediately follows.
 
 \textit{Case 4.} Suppose that neither $A$ nor $B$ is diagonal, but $AB$ is. So $k\ne 0$, $\ell=p-k$ and $r = 0$. We will first show that $\sigma(A)\sigma(B) = \{1,\xi,\xi^2,\dots,\xi^{p^2-1}\}$.
 Since neither $A$ nor $B$ is diagonal, the spectra of their heads consist of all $p$-th roots of unity.  The spectra of tails are contained in the set of all $p^2$-th roots of unity and hence $\sigma(A)\sigma(B)$ must be a subset of the set of all powers of $\xi$. We are left to show that $\sigma(A)\sigma(B)$ contains $\{1,\xi,\xi^2,\dots,\xi^{p^2-1}\}$. We know from the above observations that:
\begin{eqnarray*}
\sigma(A) &\supseteq& \sigma(A_T)= \{1, \xi^{k+a_1p}, \xi^{2k+a_2p}, \dots, \xi^{(p-1)k+a_{p-1}p}\},\\
\sigma(B) &\supseteq&  \sigma(B_H)=\{1,\theta,\ldots, \theta^{p-1}\}=\{\xi^{tp}: t=0,1,\ldots, p-1\}\\
&\supseteq&
 \{1, \xi^{(-a_j+t)p}: j=1,2,\dots,p-1,t = 0,1,\dots, p-1\}.
\end{eqnarray*}
 Then multiplying two corresponding elements of these sets will give elements in $\sigma(A)\sigma(B)$ of the form $\xi^{kj+a_jp+(-a_j+t)p} = \xi^{kj+tp}$. Since $t$ ranges over all integers in the set $[0,p-1]$ and the residue of $kj$ modulo $p$ can be any integer in $[0,p-1]$, we can obtain all powers of $\xi$; so $\sigma(A)\sigma(B) = \{1,\xi,\xi^2,\dots,\xi^{p^2-1}\}$. Since the scaled-argument-distance between consecutive powers of $\xi$ is always $\frac{1}{p^2}$, we must have that for any $\gamma \in \sigma(AB)$ there is some $\alpha\in \sigma(A)$ and $\beta\in\sigma(B)$ such that 
 \[\frac{1}{2\pi}\left|\arg\left(\frac{\alpha\beta}{\gamma}\right)\right|\le \frac{1}{2p^2}.\]
(Take $\alpha, \beta$ to be such that the product $\alpha\beta$ is the $p^2$-th root of $1$ that is closest to $\gamma$.)
 \end{proof} 

 The above theorem shows that for $n=2p$ where $p$ is an odd prime, there exists a group $\G$ of unitary $n\times n$ matrices that is $\frac{2}{n^2}$-ASM, but not finite modulo its centre.  In the next result, we show that reducing the size of $\ep'=\frac{2}{n^2}$ by a factor of $4$ does in fact force the group in question to be finite modulo its centre.
 
 \begin{theorem}\label{th-main}
     Let $\mathcal{G} \subseteq \mathcal{M}_n(\mathbb{C})$  be a group of unitary matrices.  If $\G$ is $\frac{1}{2n^2}$-argument-submultiplicative, then $\mathcal{G}$ must be finite modulo its centre. 
 \end{theorem}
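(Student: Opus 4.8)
The plan is to argue by contradiction, after first passing to the closure. I would begin by checking that the $\ep'$-ASM condition is preserved under closure in $U(n)$: eigenvalues depend continuously on the matrix, and if $A_k\to A$, $B_k\to B$ with witnesses $\alpha_k\in\sigma(A_k)$, $\beta_k\in\sigma(B_k)$ for eigenvalues $\gamma_k\to\gamma\in\sigma(AB)$, then compactness of the unit circle lets me pass to convergent subsequences $\alpha_k\to\alpha\in\sigma(A)$, $\beta_k\to\beta\in\sigma(B)$, so the non-strict inequality survives in the limit. Hence I may assume $\G$ is a compact Lie group. If $\overline{\G}$ were finite modulo its centre then so would $\G$ be (its image in $\overline{\G}/Z(\overline{\G})$ is finite, and the kernel $\G\cap Z(\overline{\G})$ is central in $\G$), so I assume for contradiction that $\overline{\G}/Z(\overline{\G})$ is infinite, hence a positive-dimensional compact group.

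The second step extracts the right continuous family, and here I would flag the one genuinely misleading point: one cannot reduce to a non-abelian identity component and invoke an $SU(2)$-subgroup, because the group $\mathcal{T}$ of Theorem~\ref{thm-5} has \emph{abelian} identity component (the torus of diagonal ``heads'') yet is infinite modulo its centre, the infinitude coming from the finite element $C$ acting non-trivially on that torus. Accordingly I extract only a circle subgroup $\{U_\theta=\exp(2\pi i\theta H)\}\subseteq\overline{\G}$, normalised to period one so that $H$ has integer eigenvalues $h_1\le\cdots\le h_n$, together with an element $W\in\overline{\G}$ with $WU_\theta W^{-1}\ne U_\theta$ for some $\theta$. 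The relevant data are $\sigma(U_\theta)=\{e^{2\pi i h_j\theta}\}_j$, the fixed set $\sigma(W)=\{e^{2\pi i\mu_m}\}_m$, and continuous lifts $\gamma_1(\theta),\dots,\gamma_n(\theta)\in\RR$ of the normalised arguments of the eigenvalues of $U_\theta W$, whose total winding $\sum_l\bigl(\gamma_l(1)-\gamma_l(0)\bigr)$ equals $\tr H$ by the determinant.

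The heart of the argument is a covering estimate. The products $\sigma(U_\theta)\sigma(W)$ furnish at most $n^2$ ``target'' curves $\theta\mapsto h_j\theta+\mu_m\pmod 1$, each of constant speed $h_j$, and $\ep'$-ASM with $\ep'=\tfrac1{2n^2}$ demands that at every $\theta$ each $\gamma_l(\theta)$ lie in one of these $\le n^2$ arcs of radius $\tfrac1{2n^2}$. Their total length is at most $n^2\cdot\tfrac1{n^2}=1$, exactly the circumference, so $\tfrac1{2n^2}$ is precisely the threshold at which the arcs can tile the circle, and only when the $n^2$ targets are distinct and equally spaced. The Hellmann--Feynman formula gives $\gamma_l'(\theta)=\langle v_l(\theta),Hv_l(\theta)\rangle\in[h_1,h_n]$, so every eigenvalue curve moves at a speed in the convex hull of the target speeds. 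I would then use non-commutativity of $W$ with $\{U_\theta\}$ to force, for some $\theta$, either strictly fewer than $n^2$ distinct targets or a genuine gap into which some $\gamma_l(\theta)$ is driven; note already that at $\theta=0$ the targets collapse to the $n$ points $\{\mu_m\}$ (each $n$-fold), so equal spacing cannot persist for all $\theta$ and gaps must open as $\theta$ grows. The quantitative version integrates the covering defect over $\theta\in[0,1]$, comparing the arc-length the $n^2$ windows can sweep against the winding $\tr H$ of the eigenvalue curves, to produce a $\theta$ at which some eigenvalue of $U_\theta W$ escapes every window.

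The main obstacle is exactly this borderline: at $\ep'=\tfrac1{2n^2}$ the windows' total length equals the circumference, so no contradiction follows from crude counting, and Theorem~\ref{thm-5} shows the estimate is sharp up to the factor of four (the larger value $\tfrac2{n^2}$ genuinely admits infinite-modulo-centre examples). The delicate point is to prove that the eigenvalue curves of $U_\theta W$ must actually visit the gaps left wherever the targets fail to be equally spaced, equivalently to rule out the rigid configuration in which the $h_j$ and $\mu_m$ conspire to keep all $n^2$ products equally spaced for every $\theta$; as in the critical Case~4 of Theorem~\ref{thm-5}, that rigidity is precisely what would otherwise force the $h_j$ and $\mu_m$ to be rational with bounded denominators and hence make $\G$ finite modulo its centre after all. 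Managing the degenerate coincidences, namely repeated weights $h_j$ and targets that happen to sit exactly on eigenvalues of $U_\theta W$, is where the real work lies.
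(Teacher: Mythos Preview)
Your proposal has a genuine gap at its core. You correctly set up the continuous picture (a circle $\{U_\theta\}$ and a non-commuting $W$, eigenvalue curves $\gamma_l(\theta)$, and $n^2$ target curves $h_j\theta+\mu_m$), and you correctly observe that at $\ep'=\frac{1}{2n^2}$ the $n^2$ windows have total length exactly $1$, so naive covering gives no contradiction. But the step you call ``the heart of the argument''---integrating a covering defect to force some $\gamma_l(\theta)$ into a gap---is never carried out; you yourself say that ``managing the degenerate coincidences\dots is where the real work lies.'' No mechanism is given for why the Hellmann--Feynman speeds $\gamma_l'\in[h_1,h_n]$ together with the winding $\sum_l(\gamma_l(1)-\gamma_l(0))=\tr H$ force an escape, and the total arc-length swept by the moving windows ($\sum_{j,m}(|h_j|+2\ep')=n\sum_j|h_j|+1$) comfortably dominates any winding bound you could extract, so the sketched ``quantitative version'' does not close. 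As written, this is a plan with the decisive estimate missing.

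The paper avoids this borderline entirely by going discrete rather than continuous. After the same closure reduction, it invokes a lemma from \cite{Ball}: if a compact unitary group is infinite modulo its centre, then for \emph{every} prime $q$ it contains a finite minimal nonabelian subgroup $\H$ with $[\H,\H]$ a $q$-group. Choosing $q>n$ large enough that $\tfrac{1}{q}<\tfrac{1}{2(n^2-1)}-\tfrac{1}{2n^2}$, the Miller--Moreno classification gives explicit generators $X,Y$ with $X$ block-diagonal of determinant $1$ with entries $q$-th roots of unity and $Y$ built from cycle blocks. The key point---which replaces your missing estimate---is that Lemma~\ref{lem-spectrum} forces $\sigma(X^kY)=\sigma(Y)$ for every $k$, so the product set $\sigma(X^kY)\sigma(Y^{-1})$ is \emph{independent of $k$} and, because of built-in repetitions, has at most $n^2-1$ elements. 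That strict inequality is exactly what breaks the threshold: some gap has length $\ge\tfrac{1}{n^2-1}$, its midpoint $\gamma$ sits at distance $\ge\tfrac{1}{2(n^2-1)}>\tfrac{1}{2n^2}$ from the whole product set, and since $\sigma(X^k)$ sweeps through all $q$-th roots of unity as $k$ varies, some $\sigma(X^k)$ contains a point within $\tfrac{1}{q}$ of $\gamma$. Your continuous targets $h_j\theta+\mu_m$ are generically all distinct, so there is no analogous ``$\le n^2-1$'' mechanism in your setup; this is precisely why your argument stalls at the critical $\ep'$.
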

 \begin{proof}
First assume with no loss of generality that $\G=\overline{\G}$, i.e., that $\G$ is compact.  Indeed, if $\G$ is $\ep'$-ASM, then, since the spectrum is continuous, so is $\overline{\G}$.  Also, if $\overline{\G}/Z(\overline{\G})$ is finite, then so is $\G/Z(\G)$.  This is because $Z(\G)\subseteq Z(\overline{\G})\cap \G$ and hence $\G/Z(\G)$ can be identified as a quotient of
$\G/(Z(\overline{\G})\cap \G)\simeq (\G Z(\overline{\G}))/Z(\overline{\G}),
$
which in turn is a subgroup of $\overline{\G}/Z(\overline{\G})$.

Suppose now, toward a contradiction, that $\G$ is not finite modulo its centre. Then for all primes $q$ we know that $\mathcal{G}$ contains some finite minimal nonabelian group, say $\mathcal{H}$, whose commutator subgroup $ [\mathcal{H} ,\mathcal{H}]$ is a $q$-group (see \cite[Lemma 2.5]{Ball}; invoking this result is the reason we needed to assume that $\G$ is compact). Let $q>n$ be a prime such that
$\frac{1}{q}<\frac{1}{2(n^2-1)}-\frac{1}{2n^2}$ (the reason for the latter requirement will become apparent later in the proof).  The structure of all finite minimal nonabelian groups has been described in detail by Miller and Morreno \cite{MM}.  They are either $p$-groups or their order is divisible by exactly two distinct primes $p$ and $q$, where $q$ is the order of its commutator subgroup (see \cite[Thm. 2.3.1.]{MR}).  In both cases, all their non-scalar irreducible representations are of size $p$  \cite[Thm. 2.3.1.]{MR}.  Since $q>n$ and clearly $p\le n$, we must therefore have that $\H$ is one of the latter (i.e., its order is divisible by two distinct primes $p$ and $q$).
 From \cite[Thm. 2.2.3]{MR} we deduce that
$\mathcal{H} = \langle X,Y \rangle$,
for matrices $X$, $Y$  of the form
\[X = \begin{pmatrix} 
    X_1\\
    &\ddots\\
    &&X_m\\
    &&&1\\
    &&&&\ddots\\
    &&&&&1
\end{pmatrix} \text{ and } Y = \begin{pmatrix}
    Y_1\\
    &\ddots\\
    &&Y_m\\
    &&&\beta_{m+1}\\
    &&&&\ddots\\
    &&&&&\beta_{\ell}
\end{pmatrix}.\]
with $m\ge 1$ and 
\[X_i =
    \begin{pmatrix}
        \theta_{i,1}\\
        &\theta_{i,2}\\
        &&\ddots\\
        &&&\theta_{i,p-1}\\
        &&&&\theta_{i,p}
    \end{pmatrix}, Y_i = \beta_i\begin{pmatrix}
     0 & 1 & 0&\dots & 0\\
     0 & 0 & 1 & \dots & 0 \\
     \vdots&\vdots&\ddots&\ddots&\vdots\\
     0 & 0 &\dots & 0 & 1\\
     1 & 0 & \dots & 0 & 0
 \end{pmatrix}\in\mathcal{M}_p(\mathbb{C}),\]
for $i=1,\ldots, m$,
where each $\theta_{i,j}$ is of order $q$, the order of each $\beta_i$ is a power of $p$, and additionally, each
$X_i$ is non-scalar and of determinant $1$.

 We now consider the spectra of $A=X^kY, B=Y^{-1}$, and $C=AB=X^k$. Since $X$ and $Y$ are block diagonal and $X$ is of determinant $1$, we have by Lemma \ref{lem-spectrum} that, 
 $$\sigma(X^kY) = \bigcup_{i=1}^m \beta_i\{1,\theta, \dots, \theta^{p-1}\} = \sigma(Y)$$ 
(where $\theta$ a fixed primitive $p$-th root of unity). Observe also that
$$\sigma(Y^{-1}) = \bigcup_{i=1}^m\beta_i^{-1}\{1,\theta, \dots, \theta^{p-1}\}.$$  
From this, since both $\sigma(X^kY)$ and $\sigma(Y^{-1})$ have cardinality at most $n$ we can see that the number of distinct elements in their product $\sigma(X^kY)\sigma(Y^{-1})$ is at most $n^2 - n - mp^2+p +mp\le n^2 - 1$ (due to repetitions). So the average scaled-argument-distance between two consecutive points in $\sigma(X^kY)\sigma(Y^{-1})$ is at least $\frac{1}{n^2-1}$. Hence there must be two consecutive points, say $\omega$ and $\eta$, whose scaled-argument-distance is at least $\frac{1}{n^2-1}$. Thus their midpoint $\gamma$ on the unit circle has scaled-argument-distance at least $\frac{1}{2(n^2-1)}$ from each $\omega$ and $\eta$ (and hence at least that much form any element of $\sigma(A)\sigma(B)$). 
As we vary $k$, note that $\sigma(X^kYY^{-1}) = \sigma(X^k)$
will contain the $q$-th root of unity
closest to $\gamma$. But then, since $\frac{1}{q}<\frac{1}{2(n^2-1)}-\frac{1}{2n^2}$, the scaled-argument-distance between an element in $\sigma(X^k)$ and any element in $\sigma(X^kY)\sigma(Y^{-1})$ will be at least $\frac{1}{2(n^2-1)}-\frac{1}{q}>\frac{1}{2n^2}$. But this then contradicts the fact that the spectrum of $\mathcal{G}$ is $\frac{1}{2n^2}$-argument-submultiplicative. Thus $\mathcal{G}$ must be finite modulo its center. 
 \end{proof}
 
\section{Further Explorations}
\subsection{Optimality of our results}
The group of tadpole matrices in Example \ref{ex-tadpole} is an example of a $\frac{2}{n^2}$-ASM groups of unitary $n\times n$ matrices that is not finite modulo its centre (here $n=2p$, where $p$ is an odd prime).  But this group is clearly not irreducible.  The group of its heads (i.e., the group of $p\times p$ matrices generated by all unitary diagonal matrices $D$ and the cycle matrix $C$) is easily seen to be an example of an irreducible group of unitary $p\times p$-matrices that is $\frac{1}{2p}$-ASM, but not essentially finite. Can we do better?  In particular: is there some constant $c$ such that for infinitely many $n$, there  exist irreducible $\frac{c}{n^2}$-ASM groups of unitary $n\times n$ matrices that are not essentially finite?

\subsection{Remarks on a representation-theoretic version of approximate submultiplicativity and linear bounds} The class ($\hat{\mathrm{s}}$) of groups $G$ with the property that all irreducible representations of all subgroups are submultiplicative was introduced in \cite{K3} and then studied further in \cite{GKOR}.  We extend this class of groups in the definition below.  Note that in the case of $\ep$-submultiplicativity we are considering all representations, whereas in the case of $\ep$-ASM we restrict ourselves to the class of unitary representations. We remark that for finite groups there is no loss of generality in this restriction, as every representation is equivalent to a unitary representation.
\begin{defn}
Let $\bfep=(\ep_n)_{n=1}^\infty$ be a sequence of nonnegative real numbers and let $G$ be an abstract group.  We say that $G$ is in class $(\epws)$ if for every $n$, every subgroup $H$ of $G$, and every irreducible representation 
$\rho\colon H\to\mathcal{M}_n(\mathbb{C})$, the image $\rho(H)$ is $\ep_n$-submultiplicative.  We say that $G$ is in class $(\epwas)$ if for every $n$ and every subgroup $H$ of $G$, the image of every irreducible unitary representation $\rho\colon H\to\mathcal{M}_n(\mathbb{C})$ is $\ep_n$-ASM.

We also define the corresponding notions for any fixed representation: a representation $\rho\colon G\to\M_n(\mathbb{C})$ is $\epws$, or $\epwas$ respectively (for the latter we additionally assume that $\rho$ is unitary), if for every subgroup $H$ and every irreducible subrepresentation of $H$, $\rho_W\colon H\to \operatorname{End}(W)\simeq \M_m(\mathbb{C})$, the image $\rho_W(H)$ is $\ep_m$-submultiplicative, or $\ep_m$-ASM, respectively (here $W$ is a minimal invariant subspace for $\rho(H)$ of dimension $m$).  
\end{defn}

Assume now that $\bfep=(\ep_n)_{n=1}^\infty$ is a sequence of positive numbers such that for every prime $p$, $\ep_p<\frac{1}{2p}$.  For a prime $p$ also define $\delta_p=\frac{1}{2p}-\ep_p$ and let $Q(p)=Q_{\bfep}(p)$ denote the set of all primes $q$ for which 
$$
\bigcup_{j=0}^{p-1} \left(\frac{2j+1}{2p}-\delta_p, \frac{2j+1}{2p}+\delta_p\right) \cap \left\{\frac{k}{q}: k=1,\ldots, q-1\right\}=\emptyset.
$$
In other words, $q\in Q(p)$ precisely when the distance from any fraction $\frac{k}{q}$, $k=1\ldots, q-1$ to the closest fraction of the form $\frac{j}{p}$, $j=0,\ldots, p$ is smaller than $\ep_p$.  Note that if $q>\frac{1}{2\delta_p}$, then $q\not\in Q(p)$.  Therefore, the set $Q(p)$ is always finite.

Assume now that $G$ is a finite minimal nonabelian group whose order is divisible by two primes $p$, $q$ (not necessarily distinct) and that $[G,G]$ is a $q$-group (in the language of \cite{MR} we would say that $G$ is the $(p,q,f)$-group for some irreducible divisor $f$ of $x^p-1\in\mathbb{Z}_q[x]$).
It is then easy to see (using the same ideas as in the proof of Theorem \ref{th-main}) that $G$ is in the class $(\epwas)$ if and only if $q\in Q(p)$. 
Since $Q(p)$ is finite, we can therefore conclude that any compact matrix group (viewed as a representation of itself in the obvious way) satisfying $\epwas$ must be finite modulo its centre. 

We also remark that there are easy examples of sequences $\bfep$ satisfying the above properties such that every $Q(p)$ also contains primes $q$ different from $p$.  In these cases, the corresponding $(p,q,f)$-groups will not be nilpotent.  Hence, for such $\bfep$, the class $(\epwas)$ strictly extends the class $(\widehat{\mathrm{s}})$. Hence, it is natural to ask the following question.
\begin{quest} Are there sequences $\bfep$ for which the class $(\epwas)$ is contained in (or perhaps even coincides with) a well-known class of finite groups (e.g, $M$-groups, supersolvable groups, etc.)?
\end{quest}

\section*{Acknowledgments}
We would like to express our gratitude to the referee.  Their suggestions significantly improved the paper.

M.~Mastnak was supported in part by the NSERC Discovery Grant 371994-2019.
Most of the research presented in the paper was done as part of an undergraduate summer research project.  The authors thank Saint Mary's University and NSERC for their generous support.

\bibliographystyle{plain}

\end{document}